\numberwithin{equation}{section}
\DeclareMathOperator{\Ran}{Ran}
\DeclareMathOperator{\spec}{spec}
\DeclareMathOperator{\sign}{sign}
\DeclareMathOperator{\dist}{dist}
\DeclareMathOperator{\conv}{conv}
\DeclareSymbolFont{SY}{U}{psy}{m}{n}
\DeclareMathSymbol{\emptyset}{\mathord}{SY}{'306}
\DeclarePairedDelimiter{\abs}{|}{|}
\DeclarePairedDelimiter{\norm}{\lVert}{\rVert}
\newcommand{\dd}{\mathrm d}
\newcommand{\NN}{\mathbb{N}}
\newcommand{\RR}{\mathbb{R}}
\newcommand{\EE}{\mathsf{E}}
\theoremstyle{plain}
\newtheorem{theorem}{Theorem}[section]
\newtheorem{proposition}[theorem]{Proposition}
\newtheorem{lemma}[theorem]{Lemma}
\newtheorem{corollary}[theorem]{Corollary}
\theoremstyle{remark}
\newtheorem{remark}[theorem]{Remark}
\title[Unifying perturbations in the subspace perturbation problem]{Unifying the treatment of indefinite and semidefinite
perturbations in the subspace perturbation problem}
\subjclass[2010]{Primary 47A55; Secondary 47A15, 47B15}
\keywords{Subspace perturbation problem, spectral subspaces, maximal angle between closed subspaces}
\date{}
\author[A.\ Seelmann]{Albrecht Seelmann}
\address{A.~Seelmann,
 Technische Univer\-si\-t\"at Dortmund, Fakult\"at f\"ur Mathematik, D-44221 Dortmund, Germany}
\email{albrecht.seelmann@math.tu-dortmund.de}
\begin{document}

\begin{abstract}
  The variation of spectral subspaces for linear self-adjoint operators under an additive bounded perturbation is considered. The
  objective is to estimate the norm of the difference of two spectral projections associated with isolated parts of the spectrum
  of the perturbed and unperturbed operators. Recent results for semidefinite and general, not necessarily semidefinite,
  perturbations are unified to statements that cover both types of perturbations and, at the same time, also allow for certain
  perturbations that were not covered before.
\end{abstract}

\maketitle

\section{Introduction and main result}

The present note continues the considerations on the~\emph{subspace perturbation problem} previously discussed in several recent
works such as~\cite{AM13,Seel18,Seel16,Seel19}; see also the references cited therein. More specifically, the results for
semidefinite perturbations from~\cite{Seel19} and those for general, not necessarily semidefinite, perturbations
from~\cite{Seel18,Seel14} are unified to general statements which cover both types of perturbations and, at the same time, allow
for certain perturbations that have not been covered before. This is achieved by decomposing the perturbation into its
nonnegative and nonpositive parts with respect to its spectral decomposition. Naturally, the corresponding results here bare a
great similarity to its predecessors. Since the proofs require only small modifications to the previous ones and the essential
parts of the theory remain the same, this note concentrates on giving the formal statements but otherwise skips on discussions
as well as details of the proofs as much as possible and refers to the previous works instead.

Let $A$ be a self-adjoint, not necessarily bounded, operator on a separable Hilbert space such that the spectrum of $A$ is
separated into two disjoint components, that is,
\begin{equation}\label{eq:specSep}
  \spec(A)
  =
  \sigma \cup \Sigma
  \quad\text{ with }\
  d
  :=
  \dist(\sigma, \Sigma)
  >
  0
  .
\end{equation}
Moreover, given a bounded self-adjoint operator $V$ on the same Hilbert space, define bounded nonnegative operators $V_\pm$ with
$V = V_+ - V_-$ via functional calculus by
\begin{equation*}
  V_+ := \bigl( 1 + \sign(V) \bigr)V / 2
  ,\quad
  V_- := \bigl( \sign(V) - 1 \bigr)V / 2
  .
\end{equation*}
We clearly have $\norm{V_\pm} \le \norm{V}$, and $V_-$ or $V_+$ vanish if $V$ is nonnegative or nonpositive, respectively.

If
\begin{equation}\label{eq:normBound}
  \norm{V_+} + \norm{V_-}
  <
  d
  ,
\end{equation}
then it can be shown (see Corollary~\ref{cor:specPert} below and the discussion thereafter) that the spectrum of the perturbed
operator $A+V$ is likewise separated into two disjoint components,
\begin{equation}\label{eq:specSepPert}
  \spec(A+V)
  =
  \omega \cup \Omega
  \quad\text{ with }\
  \dist(\omega,\Omega)
  \ge
  d - \norm{V_+} - \norm{V_-}
  ,
\end{equation}
where
\begin{equation}\label{eq:defomega}
  \omega
  =
  \spec(A+V) \cap \bigl( \sigma + [-\norm{V_-},\norm{V_+}] \bigr)
\end{equation}
and analogously for $\Omega$ (with $\sigma$ replaced by $\Sigma$); here we have used the short-hand notation
$\sigma + [-\norm{V_-},\norm{V_+}] := \{ \lambda + t \colon \lambda \in \sigma,\ -\norm{V_-} \le t \le \norm{V_+} \}$. Clearly,
the~\emph{gap non-closing} condition~\eqref{eq:normBound} is sharp. Also note that~\eqref{eq:normBound} covers semidefinite
perturbations $V$ with $\norm{V} < d$,  as well as general, not necessarily semidefinite, perturbations satisfying
$\norm{V} < d/2$. On the other hand, condition~\eqref{eq:normBound} also includes certain indefinite perturbations with
$d / 2 \le \norm{V} < d$ that were not covered in the previous works. It is interesting to observe that~\eqref{eq:normBound}
formally differs from the condition $\norm{V} < d$ for semidefinite perturbations and $\norm{V} < d/2$ for general perturbations
by the appearance of $\norm{V_+} + \norm{V_-}$ instead of $\norm{V}$ and $2\norm{V}$, respectively. In fact, this seemingly
na\"ive observation remains valid also when it comes to the main results discussed here and, in this sense, represents the
essence of the present note.

The variation of the spectral subspaces associated with the components of the spectrum is studied in terms of the difference of
the corresponding spectral projections $\EE_A(\sigma)$ and $\EE_{A+V}(\omega)$, where $\EE_A$ and $\EE_{A+V}$ denote the
projection-valued spectral measures for the unperturbed and perturbed self-adjoint operators $A$ and $A+V$, respectively.

The first main result discussed here holds under a certain favourable spectral separation condition for the unperturbed operator
$A$ and unifies~\cite[Theorem~1.1]{Seel19} for semidefinite perturbations and its corresponding well-known variant for general
perturbations (cf., e.g.,~\cite[Remark~2.9]{Seel14}).

\begin{theorem}\label{thm:favGeom}
  Let $A$ be a self-adjoint operator on a separable Hilbert space such that the spectrum of $A$ is separated as
  in~\eqref{eq:specSep}. Let $V$ be a bounded self-adjoint operator on the same Hilbert space satisfying~\eqref{eq:normBound},
  and choose $\omega \subset \spec(A+V)$ as in~\eqref{eq:defomega}.

  If, addition, the convex hull of one of the components $\sigma$ and $\Sigma$ is disjoint from the other component, that is,
  $\conv(\sigma) \cap \Sigma = \emptyset$ or $\sigma \cap \conv(\Sigma) = \emptyset$, then
  \begin{equation}\label{eq:maxAnglefavGeom}
    \arcsin\bigl( \norm{\EE_A(\sigma) - \EE_{A+V}(\omega)} \bigr)
    \le
    \frac{1}{2} \arcsin\Bigl( \frac{\norm{V_+} + \norm{V_-}}{d} \Bigr)
    <
    \frac{\pi}{4}
    ,
  \end{equation}
  and this estimate is sharp.
\end{theorem}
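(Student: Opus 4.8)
The plan is to retrace the arguments used for semidefinite perturbations in~\cite{Seel19} and for general, not necessarily semidefinite, perturbations (cf.~\cite[Remark~2.9]{Seel14}), now keeping the nonnegative part $V_+$ and the nonpositive part $-V_-$ of $V$ apart. In those references the perturbation enters the sharp estimates only through the width of the spectral enclosure used to track the relevant component: this width is $\norm{V}$ in the semidefinite case (the component is enclosed in $\sigma+[0,\norm{V}]$) and $2\norm{V}$ in the general case ($\sigma+[-\norm{V},\norm{V}]$), whereas the sharp enclosure~\eqref{eq:defomega}, namely $\sigma+[-\norm{V_-},\norm{V_+}]$, has width $\norm{V_+}+\norm{V_-}$. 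Running the known proofs with this asymmetric enclosure in place replaces $\norm{V}$ (resp.\ $2\norm{V}$) by $\norm{V_+}+\norm{V_-}$ in~\eqref{eq:maxAnglefavGeom}.

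First I would apply Corollary~\ref{cor:specPert} to the scaled perturbations $tV$, $t\in[0,1]$ (which again satisfy $\norm{(tV)_+}+\norm{(tV)_-}=t(\norm{V_+}+\norm{V_-})<d$), to obtain the interpolating family $A(t):=A+tV$ with $\spec(A(t))=\omega(t)\cup\Omega(t)$, $\omega(t)=\spec(A(t))\cap(\sigma+[-t\norm{V_-},t\norm{V_+}])$, $\dist(\omega(t),\Omega(t))\ge d-t(\norm{V_+}+\norm{V_-})>0$, and $\omega(0)=\sigma$, $\omega(1)=\omega$. The spectral projections $P(t):=\EE_{A(t)}(\omega(t))$ then interpolate between $\EE_A(\sigma)$ and $\EE_{A+V}(\omega)$, and, exactly as in the references, $t\mapsto P(t)$ is continuous and in fact Lipschitz on $[0,1]$; this rests on the uniform lower bound for $\dist(\omega(t),\Omega(t))$ together with the fact that $\omega(t)$ and $\Omega(t)$ can be separated by a contour (or a smooth real-valued function of the operator) chosen locally uniformly in $t$.

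For the core estimate, assume first $\conv(\sigma)\cap\Sigma=\emptyset$. Then $\spec(A)$ on $\Ran P(0)$ lies in the bounded interval $\conv(\sigma)=[\inf\sigma,\sup\sigma]$, while~\eqref{eq:specSep} forces the remaining spectrum of $A$ to avoid its open $d$-neighbourhood; hence, by the above enclosures, $\spec(A(t))$ on $\Ran(1-P(t))$ avoids the interval obtained from $(\inf\sigma-d,\sup\sigma+d)$ by moving its left endpoint to the right by $t\norm{V_+}$ and its right endpoint to the left by $t\norm{V_-}$, so it stays uniformly separated from $\conv(\sigma)$ on both sides. This is precisely the configuration handled by the sharp rotation estimate underlying~\cite[Theorem~1.1]{Seel19} and its general variant; applied with the asymmetric enclosure it yields, as there, that $\theta(t):=\arcsin\norm{P(0)-P(t)}$ stays below $\pi/4$ and obeys a differential inequality of the form $\theta'(t)\le\bigl(\norm{V_+}+\norm{V_-}\bigr)\big/\bigl(2d\cos(2\theta(t))\bigr)$ for a.e.\ $t\in[0,1]$, with the \emph{unperturbed} gap $d$ surviving in the denominator. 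Integrating from $0$ with $\theta(0)=0$ gives $\sin(2\theta(1))\le(\norm{V_+}+\norm{V_-})/d$, which is~\eqref{eq:maxAnglefavGeom}. The case $\sigma\cap\conv(\Sigma)=\emptyset$ follows symmetrically, interchanging the roles of $\sigma$ and $\Sigma$ (and of $V_+$ and $V_-$).

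The real work, I expect, is the bookkeeping inside that core estimate: one must verify that substituting the sharp asymmetric enclosure for the symmetric ones used in the references is admissible at every step and leaves all other estimates untouched, so that the perturbation parameter in the final bound is exactly $\norm{V_+}+\norm{V_-}$ over the original $d$. That some care is needed is already clear from the fact that the naive shortcut --- splitting $A\rightsquigarrow A+V_+\rightsquigarrow A+V$ and applying the semidefinite estimate of~\cite{Seel19} twice, combined with the triangle inequality for $\arcsin\norm{\,\cdot\,}$ --- only gives the strictly weaker bound $\tfrac12\arcsin(\norm{V_+}/d)+\tfrac12\arcsin\bigl(\norm{V_-}/(d-\norm{V_+})\bigr)$. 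Finally, sharpness of~\eqref{eq:maxAnglefavGeom} is obtained, as in the references, from a (block) two-dimensional example: $A=\operatorname{diag}(0,d)$ with $V$ assembled from a rank-one nonnegative and a rank-one nonpositive part of norms $\norm{V_+}$ and $\norm{V_-}$ chosen to realise the extremal rotation.
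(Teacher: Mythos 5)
Your overall architecture---interpolate along $A+tV$, use norm-continuity of $t\mapsto\EE_{A+tV}(\omega_t)$ to keep the maximal angle below $\pi/4$, derive the bound from a $\sin2\theta$-type estimate in which the \emph{unperturbed} gap $d$ survives, and produce a $2\times2$ example for sharpness---is the paper's. But there is a genuine gap at the core: you locate the source of the improvement from $\norm{V}$ (resp.\ $2\norm{V}$) to $\norm{V_+}+\norm{V_-}$ in the width of the spectral enclosure $\sigma+[-\norm{V_-},\norm{V_+}]$ and declare the rest ``bookkeeping''. That is not where the gain comes from. In the $\sin2\Theta$ theorem the enclosure only determines which set $\omega$ is and the size of the perturbed gap; the perturbation enters the \emph{numerator} of the rotation bound through the operator quantity $\norm{V-KVK}$ with $K=Q-Q^\perp$ (equivalently, through $2\norm{\EE_A(\Sigma)U^*VU\EE_A(\sigma)}$ in the alternative derivation of Remark~\ref{rem:sin2theta}), and for general self-adjoint $V$ one only gets $\norm{V-KVK}\le 2\norm{V}$. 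The actual new step---the reason the decomposition $V=V_+-V_-$ is introduced at all---is the chain $\norm{V-KVK}\le\norm{V_+-KV_+K}+\norm{V_--KV_-K}\le\norm{V_+}+\norm{V_-}$, which rests on the nontrivial fact that $\norm{W-KWK}\le\norm{W}$ for \emph{nonnegative} $W$ and self-adjoint unitary $K$ (Proposition~\ref{prop:sin2Theta}, imported from~\cite{Seel19}). No manipulation of enclosures produces this inequality, and your proposal never supplies it, so the differential inequality you write down with numerator $\norm{V_+}+\norm{V_-}$ is unsubstantiated. (The ODE formulation is also a detour with a mismatched denominator: a local comparison of $P(t)$ and $P(t+h)$ would put the \emph{perturbed} gap $d-t(\norm{V_+}+\norm{V_-})$ downstairs; to keep $d$ one applies the $\sin2\Theta$ bound directly to the pair $A$, $Q=\EE_{A+tV}(\omega_t)$, which already yields $\sin2\theta(t)\le t(\norm{V_+}+\norm{V_-})/d$ for every $t$ with no integration, continuity then forcing $\theta(t)<\pi/4$ throughout.)

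The sharpness claim is also only asserted: saying the two rank-one pieces are ``chosen to realise the extremal rotation'' assumes exactly what must be shown. One has to exhibit, for prescribed $v_\pm$, a $2\times2$ matrix $V$ with $\spec(V)=\{-v_-,v_+\}$ that rotates the relevant eigenvector of $A$ by precisely $\tfrac12\arcsin(v_++v_-)$, and verify this; the paper writes the matrix down explicitly and checks the eigenvector identity. This is secondary to the issue above, but as written it is not a proof of sharpness.
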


As indicated above, Theorem~\ref{thm:favGeom} differs from its predecessors by the appearance of $\norm{V_+} + \norm{V_-}$
instead of $\norm{V}$ and $2\norm{V}$, respectively. The same difference shows up when considering the generic result where no
additional spectral separation condition other than~\eqref{eq:specSep} is assumed. In order to discuss this here, it is necessary
to recall from~\cite{Seel18} (cf.~also~\cite{Seel19}) the function that has played the crucial role in the corresponding results
for general and semidefinite perturbations:

Set
\begin{equation*}
  c_\text{crit}
  :=
  \frac{1}{2} - \frac{1}{2} \Bigl( 1 - \frac{\sqrt{3}}{\pi} \Bigr)^3
  =
  0{.}4548399\ldots
\end{equation*}
and define $N\colon [0,c_\text{crit}] \to [0,\pi/2]$ by
\begin{equation}\label{eq:boundFunc}
  N(x)
  =
  \begin{cases}
    \frac{1}{2}\arcsin(\pi x) & \text{ for }\quad 0\le x\le \frac{4}{\pi^2+4},\\[0.15cm]
    \arcsin\Bigl(\sqrt{\frac{2\pi^2x-4}{\pi^2-4}}\,\Bigr) & \text{ for }\quad \frac{4}{\pi^2+4} < x < 4\frac{\pi^2-2}
      {\pi^4},\\[0.15cm]
    \arcsin\bigl(\frac{\pi}{2}(1-\sqrt{1-2x}\,)\bigr) & \text{ for }\quad 4\frac{\pi^2-2}{\pi^4} \le x \le \kappa,\\[0.15cm]
    \frac{3}{2}\arcsin\bigl(\frac{\pi}{2}(1-\sqrt[\leftroot{4}3]{1-2x}\,)\bigr) & \text{ for }\quad\kappa < x \le c_\text{crit}.
  \end{cases}
\end{equation}
Here, $\kappa \in ( 4\frac{\pi^2-2}{\pi^4} , 2\frac{\pi-1}{\pi^2} )$ is the unique solution to the equation
\begin{equation*}
  \arcsin\Bigl(\frac{\pi}{2}\bigl(1-\sqrt{1-2\kappa}\,\bigr)\Bigr)=\frac{3}{2}\arcsin\Bigl(\frac{\pi}{2}
  \bigl(1-\sqrt[\leftroot{4}3]{1-2\kappa}\,\bigr)\Bigr)
\end{equation*}
in the interval $(0,2\frac{\pi-1}{\pi^2}]$.

The second principal result of this note now unifies~\cite[Theorem~1.2]{Seel19} for semidefinite perturbations
and~\cite[Theorem~1]{Seel18} for general perturbations.

\begin{theorem}\label{thm:gen}
  Let $A$ be a self-adjoint operator on a separable Hilbert space such that the spectrum of $A$ is separated as
  in~\eqref{eq:specSep}, and let $V$ and $\omega$ as in Theorem~\ref{thm:favGeom}. If, in addition, $V$ satisfies
  \begin{equation*}
    \norm{V_+} + \norm{V_-}
    <
    2c_\text{crit} \cdot d
    ,
  \end{equation*}
  then
  \begin{equation*}
    \arcsin\bigl(\norm{\EE_A({\sigma})-\EE_{A+V}(\omega))}\bigr)
    \le
    N\Bigl(\frac{\norm{V_+}+\norm{V_-}}{2d}\Bigr)
    <
    \frac{\pi}{2}
    ,
  \end{equation*}
  where $N$ is given by~\eqref{eq:boundFunc}.
\end{theorem}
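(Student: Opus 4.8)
The plan is to mirror the proofs of~\cite[Theorem~1.2]{Seel19} for semidefinite and~\cite[Theorem~1]{Seel18} for general perturbations; the only new ingredient is that the decomposition $V = V_+ - V_-$ into the two nonnegative parts furnished by the spectral theorem is exploited to bound the displacement of the spectral components of $A$ \emph{one-sidedly}. First, since $\norm{V_+} + \norm{V_-} < 2c_\text{crit}\,d < d$, the preliminary result on spectral separation (Corollary~\ref{cor:specPert}), together with the discussion surrounding it, provides both the splitting of $\spec(A+V)$ recorded in~\eqref{eq:specSepPert}--\eqref{eq:defomega} and the crude a priori bound $\norm{\EE_A(\sigma)-\EE_{A+V}(\omega)} < 1$, that is, $\arcsin\norm{\EE_A(\sigma)-\EE_{A+V}(\omega)} < \pi/2$. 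This is the starting input for the iteration of~\cite{Seel18,Seel19}.

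Next I would run the path argument of~\cite{Seel18,Seel19}. For $t \in [0,1]$ set $A(t) := A + tV$. Since $\norm{tV_+} + \norm{tV_-} = t(\norm{V_+}+\norm{V_-}) < d$, Corollary~\ref{cor:specPert} applied to the perturbation $tV$ shows that $\spec(A(t))$ splits into the component $\sigma(t) := \spec(A(t)) \cap (\sigma + [-t\norm{V_-}, t\norm{V_+}])$ and its complement $\Sigma(t)$, with
\[
  \dist\bigl(\sigma(t),\Sigma(t)\bigr) \;\ge\; d - t\bigl(\norm{V_+}+\norm{V_-}\bigr) \;>\; 0 ,
\]
and with $\sigma(0) = \sigma$, $\sigma(1) = \omega$. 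The projection $P(t) := \EE_{A(t)}(\sigma(t))$ then varies with $t$ exactly as in the earlier works, and their argument, which estimates the rotation of $P(t)$ by accumulating perturbative contributions along a fine partition of $[0,1]$, uses $V$ at each step only through the infinitesimal upward push $\le \norm{V_+}\,\dd t$, the downward push $\le \norm{V_-}\,\dd t$, and the current value of $\dist(\sigma(t),\Sigma(t))$. Carrying that computation through with this bookkeeping---where, for a general $V$, each one-sided push had previously been estimated by $\norm{V}$, producing the quantity $2\norm{V}$---yields, after one pass, the bound $\arcsin\norm{\EE_A(\sigma)-\EE_{A+V}(\omega)} \le \tfrac12\arcsin\bigl(\pi(\norm{V_+}+\norm{V_-})/(2d)\bigr)$ on the range where this is admissible, i.e.\ the first branch of~\eqref{eq:boundFunc}.

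The remaining three branches of $N$ are then obtained by iterating exactly as in the proof of~\cite[Theorem~1]{Seel18}: the bound just derived is reinserted as the a priori estimate for $\arcsin\norm{P(t)-P(0)}$ over the relevant subintervals of $[0,1]$, which sharpens the rotation estimate on a larger range of values of $(\norm{V_+}+\norm{V_-})/(2d)$ and produces the next branch; iterating a few more times---the origin of the square root, of the constant $\kappa$, and of the cube root in the last branch---completes the construction and gives
\[
  \arcsin\norm{\EE_A(\sigma)-\EE_{A+V}(\omega)} \;\le\; N\Bigl(\tfrac{\norm{V_+}+\norm{V_-}}{2d}\Bigr) ,
\]
the final strict inequality being immediate from~\eqref{eq:boundFunc} and the hypothesis $(\norm{V_+}+\norm{V_-})/(2d) < c_\text{crit}$.

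The one point where attentiveness rather than mere copying is required is the transition just described: one must verify that every step of the iteration of~\cite{Seel18,Seel19} depends on $V$ only through $\norm{V_+}$, $\norm{V_-}$ and $\dist(\sigma(t),\Sigma(t))$---so that replacing the symmetric displacement bound $t\norm{V}$ (used in either direction for a general $V$) by the asymmetric bounds $t\norm{V_+}$ upward and $t\norm{V_-}$ downward is legitimate, and in particular that the a priori bound invoked at each stage of the iteration stays valid with these modified quantities---and that neither the matching of the four branches of $N$ at their breakpoints nor the defining equation of $\kappa$ is affected. Everything else is inherited verbatim from~\cite{Seel18,Seel19}, which is the reason only small modifications to the earlier proofs are needed.
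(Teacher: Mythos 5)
Your overall route---decompose $V = V_+ - V_-$, run the path/partition argument of~\cite{Seel18,Seel19} along $t \mapsto A+tV$, and reduce to the variational problem whose value is $N$---is the same as the paper's. But there is a genuine gap at the technical heart of the matter: you account for the quantity $\norm{V_+}+\norm{V_-}$ only through the one-sided displacement of the spectrum, i.e.\ through the improved gap $\dist(\omega_t,\Omega_t) \ge d - t(\norm{V_+}+\norm{V_-})$ supplied by Corollary~\ref{cor:specPert}. That improvement only affects the \emph{denominator} of the per-step rotation bound. The \emph{numerator} comes from an entirely different source, namely the $\sin2\Theta$ theorem: on each subinterval one needs
\begin{equation*}
  \norm{\sin 2\Theta_j}
  \le
  \frac{\pi}{2}\,\frac{(t_{j+1}-t_j)\bigl(\norm{V_+}+\norm{V_-}\bigr)}{d - t_j\bigl(\norm{V_+}+\norm{V_-}\bigr)}
  ,
\end{equation*}
and the appearance of $\norm{V_+}+\norm{V_-}$ there rests on the operator inequality $\norm{V - KVK} \le \norm{V_+}+\norm{V_-}$ (Proposition~\ref{prop:sin2Theta}), proved by splitting $V=V_+-V_-$ and using $\norm{W - KWK}\le\norm{W}$ for nonnegative $W$. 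Your proposal never states or proves this; the ``upward/downward push'' bookkeeping you describe concerns only the spectrum and would, by itself, leave you with the generic bound $\norm{V-KVK}\le 2\norm{V} = 2\max(\norm{V_+},\norm{V_-})$, which is strictly worse than $\norm{V_+}+\norm{V_-}$ whenever $\norm{V_+}\ne\norm{V_-}$ and does not yield the claimed argument $(\norm{V_+}+\norm{V_-})/(2d)$ of $N$.

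Two further points. First, the a priori bound $\norm{\EE_A(\sigma)-\EE_{A+V}(\omega)}<1$ does \emph{not} follow from Corollary~\ref{cor:specPert}: spectral separation alone says nothing about the projections. What actually resolves the branch ambiguity in passing from a bound on $\norm{\sin2\Theta}$ to a bound on $\arcsin\norm{\EE_A(\sigma)-Q}$ is the norm continuity of the path $t\mapsto \EE_{A+tV}(\omega_t)$ (Lemma~\ref{lem:projCont}), which your proposal never invokes. Second, the four branches of $N$ do not arise from re-inserting the derived bound as a sharpened a priori estimate; they are the explicit solution of the minimization of $\tfrac12\sum_j\arcsin(\pi\lambda_j/2)$ subject to $\prod_j(1-\lambda_j)=1-(\norm{V_+}+\norm{V_-})/d$ and $0\le\lambda_j\le 2/\pi$, the square and cube roots coming from partitions with two and three equal factors. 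This last point is largely expository since you defer to~\cite{Seel18}, but the first two are substantive omissions.
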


A more detailed discussion on the function $N$ can be found in~\cite{Seel18}.

The rest of this note is organized as follows:

In Section~\ref{subsec:specPert}, the perturbation of the spectrum with respect to the decomposition $V = V_+ - V_-$ of the
perturbation and, in particular, the spectral separation~\eqref{eq:specSepPert},~\eqref{eq:defomega} is discussed.
Section~\ref{subsec:sin2Theta} is devoted to a corresponding variant of the Davis-Kahan $\sin2\Theta$ theorem, which is the core
of the proofs of the main theorems. The latter are finally presented in Section~\ref{subsec:mainproofs}.

\section{Proofs}

\subsection{Perturbation of the spectrum}\label{subsec:specPert}

The following result extends the statement of~\cite[Proposition~2.1]{Seel19} to not necessarily semidefinite perturbations;
cf.~also~\cite[Theorem~3.2]{Ves08} and~\cite[Eq.~(9.4.4)]{BS87}.

\begin{proposition}\label{prop:specPert}
  Let the finite interval $(a,b) \subset \RR$, $a < b$, be contained in the resolvent set of the self-adjoint operator $A$.
  Moreover, let $V$ be a bounded self-adjoint operator on the same Hilbert space with $\norm{V_+} + \norm{V_-} < b-a$. Then, the
  interval $( a+\norm{V_+} , b-\norm{V_-} )$ belongs to the resolvent set of the perturbed operator $A+V$.
\end{proposition}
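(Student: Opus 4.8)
The plan is to use the standard fact that an interval $(\alpha,\beta)$ lies in the resolvent set of a self-adjoint operator $T$ if and only if $\norm{(T-\mu)x} \ge r\norm{x}$ for all $x$ in the domain, where $\mu = (\alpha+\beta)/2$ is the midpoint and $r = (\beta-\alpha)/2$ is the half-length; equivalently, the spectral projection $\EE_T\bigl((\alpha,\beta)\bigr)$ vanishes. Since $(a,b)$ is in the resolvent set of $A$, we have $\norm{(A-\lambda)x} \ge \rho(\lambda)\norm{x}$ for every $x \in \dom(A)$, where $\rho(\lambda) = \dist(\lambda,\spec(A)) \ge \min\{\lambda-a,\,b-\lambda\}$ for $\lambda \in (a,b)$. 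The goal is to show the analogous lower bound for $A+V$ at every $\mu$ in the shrunk interval $(a+\norm{V_+},\,b-\norm{V_-})$.

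First I would fix such a $\mu$ and estimate $\norm{(A+V-\mu)x}$ from below for $x \in \dom(A+V) = \dom(A)$ with $\norm{x}=1$. Writing $V = V_+ - V_-$ and using that $V_\pm \ge 0$, the quadratic-form inequality
\begin{equation*}
  \langle x, (A+V-\mu)x\rangle
  =
  \langle x,(A-\mu)x\rangle + \langle x,V_+x\rangle - \langle x,V_-x\rangle
\end{equation*}
lets me sandwich the numerical range: if $\langle x,(A-\mu)x\rangle \ge b-\mu$ is not directly available, I instead argue via spectral projections of $A$. Split $x = \EE_A\bigl((-\infty,a]\bigr)x + \EE_A\bigl([b,\infty)\bigr)x =: x_- + x_+$ (the middle piece is zero because $(a,b)$ is in the resolvent set). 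On the range of $\EE_A\bigl([b,\infty)\bigr)$ one has $A \ge b$, hence $\langle y,(A+V-\mu)y\rangle \ge (b-\mu)\norm{y}^2 - \langle y,V_-y\rangle \ge (b-\mu-\norm{V_-})\norm{y}^2 > 0$ for $y = x_+$; symmetrically $\langle z,(A+V-\mu)z\rangle \le -(\mu-a-\norm{V_+})\norm{z}^2 < 0$ for $z = x_-$. This shows $A+V-\mu$ is positive on one reducing-type piece and negative on the complementary one, but $V$ need not respect this decomposition, so the two pieces are not orthogonal with respect to $A+V-\mu$; the clean conclusion $\mu \notin \spec(A+V)$ does not follow from sign considerations alone.

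The main obstacle is exactly this coupling, and the way around it is to avoid forms and work with norms directly, following the argument behind \cite[Proposition~2.1]{Seel19} and the Birman--Schwinger/Veseli\'c-type bounds in \cite{Ves08,BS87}. Concretely, I would estimate
\begin{equation*}
  \norm{(A+V-\mu)x}
  \ge
  \norm{(A-\mu)x} - \norm{Vx}
\end{equation*}
and then refine the crude bound $\norm{Vx} \le \norm{V}\norm{x}$ by exploiting the one-sided localization: decomposing $x = x_- + x_+$ as above, note $(A-\mu)x_\pm$ and $x_\pm$ stay in the respective spectral subspaces, so $\norm{(A-\mu)x}^2 = \norm{(A-\mu)x_-}^2 + \norm{(A-\mu)x_+}^2 \ge (\mu-a)^2\norm{x_-}^2 + (b-\mu)^2\norm{x_+}^2$, while for the perturbation one uses $\langle x,Vx\rangle \le \norm{V_+}\norm{x_+'}^2 \dots$ — more robustly, one invokes the operator inequality $-\norm{V_-} \le V \le \norm{V_+}$ together with the fact that $A-\mu \ge b-\mu$ on $\Ran x_+$ and $A-\mu \le a-\mu$ on $\Ran x_-$ to conclude that $A+V-\mu$ is boundedly invertible: its form is $\ge (b-\mu-\norm{V_-})\,\EE_A([b,\infty)) \;-\; (\mu-a-\norm{V_+})\,\EE_A((-\infty,a])$ as an operator inequality sandwich, and since both coefficients are strictly positive and the two projections are complementary, $|A+V-\mu| \ge \min\{b-\mu-\norm{V_-},\,\mu-a-\norm{V_+}\} > 0$. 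This gives the invertibility of $A+V-\mu$, hence $\mu$ lies in the resolvent set; as $\mu$ was an arbitrary point of $(a+\norm{V_+},b-\norm{V_-})$, the whole interval belongs to the resolvent set of $A+V$, which is the claim.
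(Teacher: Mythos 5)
Your proposal has a genuine gap at its final, decisive step. You correctly observe partway through that sign-definiteness of the quadratic form of $A+V-\mu$ on the two complementary spectral subspaces $\Ran\EE_A((-\infty,a])$ and $\Ran\EE_A([b,\infty))$ does not by itself yield $\mu\notin\spec(A+V)$, because these subspaces are not reducing for $A+V$. But your concluding sentence then asserts exactly that implication: the displayed ``operator inequality sandwich'' $A+V-\mu\ge(b-\mu-\norm{V_-})\,\EE_A([b,\infty))-(\mu-a-\norm{V_+})\,\EE_A((-\infty,a])$ is not a valid operator inequality (on $\Ran\EE_A((-\infty,a])$ the form of $A+V-\mu$ is bounded \emph{above} by $-(\mu-a-\norm{V_+})$, not below, and $A$ may be unbounded below there), and the jump to $\abs{A+V-\mu}\ge\min\{b-\mu-\norm{V_-},\,\mu-a-\norm{V_+}\}>0$ is unjustified as written. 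What is missing is the lemma that a self-adjoint operator $T$ whose form is $\ge c_+>0$ on a closed subspace and $\le -c_-<0$ on its orthogonal complement has $0$ in its resolvent set with $\abs{T}\ge\min\{c_+,c_-\}$. This is true and would close the gap: writing $x=x_++x_-$ one computes $\Re\langle (A+V-\mu)x,\,x_+-x_-\rangle\ge c_+\norm{x_+}^2+c_-\norm{x_-}^2\ge\min\{c_+,c_-\}\norm{x}^2$ with $\norm{x_+-x_-}=\norm{x}$, whence $\norm{(A+V-\mu)x}\ge\min\{c_+,c_-\}\norm{x}$ by Cauchy--Schwarz, and self-adjointness does the rest. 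As it stands, though, the step you yourself flagged as the ``main obstacle'' is the one you end up assuming.

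For comparison, the paper sidesteps all of this with a two-line reduction to the semidefinite case: it applies \cite[Proposition~2.1]{Seel19} once to the nonnegative perturbation $V_+$, which moves only the left endpoint and shows that $(a+\norm{V_+},b)$ lies in the resolvent set of $A+V_+$, and once more to the nonpositive perturbation $-V_-$ of $A+V_+$, which moves only the right endpoint. Your route, once the missing lemma is supplied, is self-contained and does not rely on the cited semidefinite result, but it is considerably longer and currently incomplete.
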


\begin{proof}
  Since $V_+$ is nonnegative with $\norm{V_+} < b-a$, it follows from~\cite[Proposition~2.1]{Seel19} that the interval
  $( a+\norm{V_+} , b )$ belongs to the resolvent set of $A+V_+$. In turn, since $-V_-$ is nonpositive and
  $\norm{V_-} < b - a -\norm{V_+}$, it follows from the same result that the interval $( a+\norm{V_+} , b-\norm{V_-} )$ belongs
  to the resolvent set of the operator $A+V = (A+V_+) - V_-$.
\end{proof}%

We have the following corollary to Proposition~\ref{prop:specPert}. The proof works exactly as the one
of~\cite[Corollary~2.2]{Seel19} and is hence omitted.

\begin{corollary}\label{cor:specPert}
  Let $A$ be a self-adjoint operator, and let $V$ be a bounded self-adjoint operator on the same Hilbert space. Then,
  \begin{equation*}
    \spec(A+V)
    \subset
    \spec(A) + [ -\norm{V_-} , \norm{V_+} ]
    .
  \end{equation*}
\end{corollary}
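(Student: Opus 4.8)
The plan is to deduce Corollary~\ref{cor:specPert} from Proposition~\ref{prop:specPert} by a routine covering argument on the complement of the spectrum. First I would fix an arbitrary point $\mu \in \RR \setminus \bigl(\spec(A) + [-\norm{V_-},\norm{V_+}]\bigr)$ and show that $\mu$ lies in the resolvent set of $A+V$; since the exceptional set $\spec(A) + [-\norm{V_-},\norm{V_+}]$ is closed, its complement is open, so $\mu$ together with a small neighbourhood avoids it. Concretely, because $\dist\bigl(\mu, \spec(A) + [-\norm{V_-},\norm{V_+}]\bigr) > 0$, there is $\varepsilon > 0$ such that the open interval $(\mu - \norm{V_-} - \varepsilon,\ \mu + \norm{V_+} + \varepsilon)$ is disjoint from $\spec(A)$, i.e.\ contained in the resolvent set of $A$.

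Next I would apply Proposition~\ref{prop:specPert} with this interval $(a,b) = (\mu - \norm{V_-} - \varepsilon,\ \mu + \norm{V_+} + \varepsilon)$, whose length is $b - a = \norm{V_+} + \norm{V_-} + 2\varepsilon > \norm{V_+} + \norm{V_-}$, so the hypothesis of the proposition is satisfied. The conclusion is that $(a + \norm{V_+},\ b - \norm{V_-}) = (\mu - \varepsilon,\ \mu + \varepsilon)$ belongs to the resolvent set of $A+V$; in particular $\mu \in \RR \setminus \spec(A+V)$. Since $\mu$ was arbitrary in the complement of $\spec(A) + [-\norm{V_-},\norm{V_+}]$, taking complements yields $\spec(A+V) \subset \spec(A) + [-\norm{V_-},\norm{V_+}]$, which is the claim.

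There is no serious obstacle here; the only minor points that need care are the bookkeeping with the $\varepsilon$-shift (making sure the shrunken interval $(a+\norm{V_+}, b-\norm{V_-})$ is a genuine nonempty neighbourhood of $\mu$, which it is precisely because $b - a$ exceeds $\norm{V_+} + \norm{V_-}$) and the observation that $\spec(A) + [-\norm{V_-},\norm{V_+}]$ is closed, which follows since it is the Minkowski sum of the closed set $\spec(A)$ with a compact interval. This is exactly the argument in~\cite[Corollary~2.2]{Seel19}, with $\norm{V_+}$ and $\norm{V_-}$ in place of the one-sided bounds appropriate there, and accordingly the authors state that the proof ``works exactly as'' in that reference and omit it.
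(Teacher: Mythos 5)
Your overall strategy---pass to the complement, fatten the avoided interval by $\varepsilon$ using closedness, and apply Proposition~\ref{prop:specPert}---is exactly the argument the paper has in mind when it refers to \cite[Corollary~2.2]{Seel19} and omits the proof. However, as written there is a sign error in the reflection of the interval, and it breaks two consecutive steps. The condition $\mu \notin \spec(A) + [-\norm{V_-},\norm{V_+}]$ says that no $\lambda \in \spec(A)$ satisfies $\lambda = \mu - t$ with $t \in [-\norm{V_-},\norm{V_+}]$, i.e.\ $\spec(A) \cap [\mu - \norm{V_+},\, \mu + \norm{V_-}] = \emptyset$: the interval gets \emph{reflected} when you move it from the $\mu$-side to the $\lambda$-side. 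So the interval you may fatten into the resolvent set of $A$ is $(a,b) = (\mu - \norm{V_+} - \varepsilon,\ \mu + \norm{V_-} + \varepsilon)$, not $(\mu - \norm{V_-} - \varepsilon,\ \mu + \norm{V_+} + \varepsilon)$ as you wrote. A quick sanity check: for $V = V_+ \ge 0$ the spectrum can only move up, so the point $\mu$ is endangered by spectrum of $A$ lying \emph{below} $\mu$ within distance $\norm{V_+}$, which is what $[\mu - \norm{V_+}, \mu]$ records; your version would instead exclude spectrum lying above $\mu$, which is harmless.

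Moreover, with your choice of $(a,b)$ the computation $(a + \norm{V_+},\ b - \norm{V_-}) = (\mu - \varepsilon,\ \mu + \varepsilon)$ is false: it actually gives $(\mu + \norm{V_+} - \norm{V_-} - \varepsilon,\ \mu + \norm{V_+} - \norm{V_-} + \varepsilon)$, an interval centred at $\mu + \norm{V_+} - \norm{V_-}$, which need not contain $\mu$. The two slips compensate only in the sense that you wrote down the desired answer; neither step is correct as stated. With the corrected choice $a = \mu - \norm{V_+} - \varepsilon$ and $b = \mu + \norm{V_-} + \varepsilon$ everything goes through verbatim: $b - a = \norm{V_+} + \norm{V_-} + 2\varepsilon$ satisfies the hypothesis of Proposition~\ref{prop:specPert}, and the conclusion $(a + \norm{V_+},\ b - \norm{V_-}) = (\mu - \varepsilon,\ \mu + \varepsilon)$ lies in the resolvent set of $A+V$, so $\mu \notin \spec(A+V)$ and the covering argument closes. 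In short, the route is the right one; only the bookkeeping of which one-sided bound fattens which end of the interval needs to be fixed.
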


It is easy to see from Corollary~\ref{cor:specPert} that in the situation of Theorems~\ref{thm:favGeom} and~\ref{thm:gen} the
spectrum of the perturbed operator is indeed separated as in~\eqref{eq:specSepPert} and~\eqref{eq:defomega}. Moreover, for each
$t \in [0,1]$ we have $(tV)_\pm = tV_\pm$, so that the spectrum of $A+tV$ is likewise separated into two disjoint components
$\omega_t$ and $\Omega_t$, defined analogously to $\omega$ and $\Omega$ in~\eqref{eq:defomega}, respectively. Namely,
\begin{equation}\label{eq:defomegat}
  \omega_t
  =
  \spec(A+tV) \cap \bigl( \sigma + [-t\norm{V_-} , t\norm{V_+}] \bigr)
\end{equation}
and analogously for $\Omega_t$ (with $\sigma$ replaced by $\Sigma$).

We need the following variant of~\cite[Lemma~2.3]{Seel19} (see~also~\cite[Theorem~3.5]{AM13}) for future reference.
\begin{lemma}\label{lem:projCont}
  Let $A$ be as in Theorem~\ref{thm:gen}, and let $V$ be a bounded self-adjoint operator on the same Hilbert space satisfying
  $\norm{V_+} + \norm{V_-} < d$. Then, the projection-valued path $[0,1] \ni t \mapsto \EE_{A+tV}(\omega_t)$ with $\omega_t$ as
  in~\eqref{eq:defomegat} is continuous in norm.
\end{lemma}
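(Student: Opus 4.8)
The plan is to reduce the assertion to the norm continuity of a \emph{single, fixed} bounded function of the operators $A+tV$, $t\in[0,1]$.

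First I would separate the two spectral pieces $\omega_t$ and $\Omega_t$ by a decomposition of $\RR$ that does not depend on $t$. Write $\delta:=d-\norm{V_+}-\norm{V_-}>0$ and set $U_\sigma:=\sigma+[-\norm{V_-},\norm{V_+}]$ and $U_\Sigma:=\Sigma+[-\norm{V_-},\norm{V_+}]$; these are closed sets, and the one-line estimate $\abs{(\lambda+s)-(\mu+s')}\ge\abs{\lambda-\mu}-(\norm{V_+}+\norm{V_-})\ge\delta$ for $\lambda\in\sigma$, $\mu\in\Sigma$ and $s,s'\in[-\norm{V_-},\norm{V_+}]$ gives $\dist(U_\sigma,U_\Sigma)\ge\delta>0$. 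Since $(tV)_\pm=tV_\pm$ and $t\le1$, Corollary~\ref{cor:specPert} yields $\spec(A+tV)\subset\spec(A)+[-t\norm{V_-},t\norm{V_+}]\subset U_\sigma\cup U_\Sigma$ for every $t\in[0,1]$; as $\sigma+[-t\norm{V_-},t\norm{V_+}]\subset U_\sigma$ lies at positive distance from $U_\Sigma$, this identifies $\omega_t$ from~\eqref{eq:defomegat} with $\spec(A+tV)\cap U_\sigma$, and analogously $\Omega_t=\spec(A+tV)\cap U_\Sigma$. Now fix once and for all a Lipschitz function $g\colon\RR\to[0,1]$ with $g\equiv1$ on $U_\sigma$ and $g\equiv0$ on $U_\Sigma$ --- e.g.\ the ratio of $\dist(\,\cdot\,,U_\Sigma)$ to $\dist(\,\cdot\,,U_\sigma)+\dist(\,\cdot\,,U_\Sigma)$, whose denominator is bounded below by $\dist(U_\sigma,U_\Sigma)\ge\delta$. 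On $\spec(A+tV)$ this $g$ coincides with the indicator of $\omega_t$, so by the functional calculus $\EE_{A+tV}(\omega_t)=g(A+tV)$ for every $t\in[0,1]$.

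It therefore remains to show that $t\mapsto g(A+tV)$ is norm continuous. Since $(A+sV)-(A+tV)=(s-t)V$ is bounded with $\norm{(s-t)V}=\abs{s-t}\,\norm{V}\to0$ as $s\to t$, this follows from the standard fact that for a bounded uniformly continuous function $g$ the map $W\mapsto g(B+W)$ is norm continuous at $W=0$, for any self-adjoint $B$. One way to obtain this: mollify, $g_\varepsilon:=g*\rho_\varepsilon$ with a smooth compactly supported mollifier $\rho_\varepsilon$; then $\norm{g-g_\varepsilon}_\infty\to0$ by uniform continuity, while $g_\varepsilon\in C_b^2(\RR)$ is operator Lipschitz, so $\norm{g_\varepsilon(B+W)-g_\varepsilon(B)}\le C_\varepsilon\norm{W}$, and $\norm{g(B+W)-g(B)}\le2\norm{g-g_\varepsilon}_\infty+C_\varepsilon\norm{W}$ is made small by choosing first $\varepsilon$ small and then $\norm{W}$ small. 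Alternatively one simply transfers the argument of~\cite[Lemma~2.3]{Seel19} (see also~\cite[Theorem~3.5]{AM13}), which is literally the present situation with $\sigma+[-\norm{V_-},\norm{V_+}]$ in the role of the neighbourhood of $\sigma$ used there.

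I expect the only genuine obstacle to be this last continuity step in the case that $A$ --- and hence possibly $\omega_t$ or $\Omega_t$ --- is unbounded, when no Riesz-projection argument over a bounded contour is directly available and one must work with the unbounded functional calculus. The point that makes the argument routine is the reduction in the first paragraph to one fixed bounded Lipschitz $g$ that simultaneously reads off $\omega_t$ for \emph{all} $t\in[0,1]$; this works precisely because the thickened components $U_\sigma$ and $U_\Sigma$ stay a uniform distance $\delta$ apart, which is exactly the $\norm{V_+}+\norm{V_-}<d$ bookkeeping underlying the whole note.
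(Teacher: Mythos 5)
Your argument is correct, but it takes a genuinely different route from the paper. The paper keeps everything quantitative: for $0\le s\le t\le 1$ it bounds $\dist(\omega_s,\Omega_t)$ and $\dist(\Omega_s,\omega_t)$ from below by $d-t\norm{V_+}-t\norm{V_-}$ and feeds this into the a priori estimate behind~\cite[Lemma~2.3]{Seel19} and~\cite[Theorem~3.5]{AM13}, obtaining the explicit Lipschitz-type bound $\norm{\EE_{A+sV}(\omega_s)-\EE_{A+tV}(\omega_t)}\le\tfrac{\pi}{2}\abs{t-s}\norm{V}/(d-t\norm{V_+}-t\norm{V_-})$ --- essentially the second alternative you mention in passing. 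You instead soften the statement: the uniform gap $\dist(U_\sigma,U_\Sigma)\ge d-\norm{V_+}-\norm{V_-}>0$ lets you write every projection as $g(A+tV)$ for one fixed bounded Lipschitz function $g$, and continuity of $t\mapsto g(A+tV)$ then follows from operator continuity of such functions. Your reduction to a single $g$ is clean and correct, including the identification $\omega_t=\spec(A+tV)\cap U_\sigma$ via Corollary~\ref{cor:specPert} and the lower bound $\dist(\cdot,U_\sigma)+\dist(\cdot,U_\Sigma)\ge\dist(U_\sigma,U_\Sigma)$ that makes $g$ Lipschitz. The one ingredient you should not undersell is the ``standard fact'' in the last step: a bounded Lipschitz function need not be operator Lipschitz, and the claim that $C_b^2$ functions are operator Lipschitz (equivalently, that bounded uniformly continuous functions have vanishing operator modulus of continuity) is true but rests on nontrivial results --- Peller's Besov-class criterion $B^1_{\infty,1}\subset\mathrm{OL}$, or the operator Bernstein inequality if you mollify with a band-limited kernel such as the Fej\'er kernel rather than a compactly supported one. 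So your proof is valid but imports a deeper black box than the paper does; in exchange it bypasses the $\sin\Theta$-type machinery entirely and yields exactly the qualitative continuity the lemma asserts, whereas the paper's route additionally produces an explicit modulus of continuity.
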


\begin{proof}
  Let $0 \le s \le t \le 1$. Since $\dist(\omega_s,\Omega_t) \ge d - t\norm{V_+} - t\norm{V_-}$ as well as
  $\dist(\Omega_s,\omega_t) \ge d - t\norm{V_+} - t\norm{V_-}$, we obtain as in the proof of~\cite[Lemma~2.3]{Seel19}
  (see~also~\cite[Theorem~3.5]{AM13}) that
  \begin{equation*}
    \norm{ \EE_{A+sV}(\omega_s) - \EE_{A+tV}(\omega_t) }
    \le
    \frac{\pi}{2} \frac{\abs{t-s}\norm{V}}{d-t\norm{V_+}-t\norm{V_-}}
    ,
  \end{equation*}
  which proves the claim.
\end{proof}%

\subsection{The $\sin2\Theta$ theorem}\label{subsec:sin2Theta}

The following variant of the Davis-Kahan $\sin2\Theta$ theorem from~\cite{DK70} unifies~\cite[Theorem~1]{Seel14}
and~\cite[Proposition~2.4]{Seel19}.

\begin{proposition}\label{prop:sin2Theta}
  Let $A$ be as in Theorem~\ref{thm:gen}. Moreover, let $V$ be a bounded self-adjoint operator on the same Hilbert space and $Q$
  be an orthogonal projection onto a reducing subspace for $A+V$. Then, the operator angle
  $\Theta = \arcsin\abs{\EE_A(\sigma)-Q}$ associated with $\EE_A(\sigma)$ and $Q$ satisfies
  \begin{equation}\label{eq:sin2Theta}
    \norm{\sin2\Theta}
    \le
    \frac{\pi}{2} \frac{\norm{V_+}+\norm{V_-}}{d}
    .
  \end{equation}
  If, in addition, $\conv(\sigma) \cap \Sigma = \emptyset$ or $\sigma \cap \conv(\Sigma) = \emptyset$, then $\pi/2$
  in~\eqref{eq:sin2Theta} can be replaced by $1$.
\end{proposition}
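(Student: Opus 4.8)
The plan is to carry through the proofs of its two predecessors~\cite[Proposition~2.4]{Seel19} (semidefinite case) and~\cite[Theorem~1]{Seel14} (general case) essentially verbatim, inserting the decomposition $V=V_+-V_-$ at the single step where the perturbation is estimated. Since $V$ is not assumed small here, $Q$ is a genuinely arbitrary reducing projection, so the relevant mechanism is the Davis--Kahan $\sin2\Theta$ argument itself rather than any path or rotation argument. I would first recall its backbone. With $P:=\EE_A(\sigma)$ one has, from $\Theta=\arcsin\abs{P-Q}$, the identity $\cos2\Theta=\tfrac12\bigl((2P-1)(2Q-1)+(2Q-1)(2P-1)\bigr)$, whence by functional calculus $\norm{\sin2\Theta}=\tfrac12\norm{(2P-1)(2Q-1)-(2Q-1)(2P-1)}=2\norm{PQ-QP}$; the task therefore reduces to bounding the commutator $[P,Q]$. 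As $Q$ reduces $A+V$ and $V$ is bounded, $QA-AQ=VQ-QV$ on the domain of $A$. Writing $2P-1$ as a resolvent integral of $A$ --- essentially the sign function of a shift of $A$ in the favourable-geometry case, and the Davis--Kahan-type representation otherwise --- and commuting $Q$ past the resolvents via $QA-AQ=VQ-QV$, one rewrites $[P,Q]$ as an integral whose integrand is linear in $V$ and built from resolvents of $A$ that, by~\eqref{eq:specSep}, are controlled in terms of $d$; evaluating the resulting scalar integrals produces the constants $1$ and $\pi/2$.

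The decomposition then does the remaining work. Because the bound on $[P,Q]$ so obtained is linear in the perturbation, I would substitute $V=V_+-V_-$ and estimate the contributions of $V_+$ and $V_-$ separately. Each is bounded precisely as in the semidefinite proof of~\cite{Seel19}, with $\norm{V_+}$ respectively $\norm{V_-}$ in place of $\norm{V}$ --- that estimate uses only the gap-controlled resolvents of $A$ and the norm of the perturbation term, never its sign. The triangle inequality then yields the factor $\norm{V_+}+\norm{V_-}$, which collapses to the bound of~\cite{Seel19} when $V$ is semidefinite (one part vanishing) and sharpens the factor $2\norm{V}$ of~\cite{Seel14} in the general case.

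For the refinement when $\conv(\sigma)\cap\Sigma=\emptyset$ --- the case $\sigma\cap\conv(\Sigma)=\emptyset$ being symmetric, since passing from $(\sigma,Q)$ to $(\Sigma,1-Q)$ leaves $\Theta$, hence $\sin2\Theta$, unchanged --- one uses that $\sigma$ then lies in the bounded interval $\conv(\sigma)=[a,b]$ with $\Sigma\subset(-\infty,a-d]\cup[b+d,\infty)$, so that $2\,\EE_A(\sigma)=\sign\bigl(A-(a-\tfrac{d}{2})\bigr)-\sign\bigl(A-(b+\tfrac{d}{2})\bigr)$ with $\abs{A-(a-\tfrac{d}{2})}\ge\tfrac{d}{2}$ and $\abs{A-(b+\tfrac{d}{2})}\ge\tfrac{d}{2}$ on $\spec(A)$; feeding this sharper representation into the scheme above in place of the Davis--Kahan one replaces $\pi/2$ by $1$, exactly as for the predecessors. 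The main obstacle is purely one of bookkeeping: one must verify that in~\cite{Seel14,Seel19} definiteness of the perturbation (or its absence) is used \emph{only} at this one linear estimate, so that the split-and-recombine is admissible throughout --- including convergence of the integral representations and the domain manipulations required for unbounded $A$ --- after which no estimate beyond those already available in~\cite{Seel14,Seel19} is needed.
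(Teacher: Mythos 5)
Your proposal is correct and takes essentially the same route as the paper, which quotes from~\cite{Seel14} the intermediate bound $\norm{\sin2\Theta}\le\frac{\pi}{2}\norm{V-KVK}/d$ with $K=Q-Q^\perp$ (with $\pi/2$ replaced by $1$ under the extra separation condition, per~\cite[Remark~2.5]{Seel14}) and then performs precisely your split-and-recombine step, namely $\norm{V-KVK}\le\norm{V_+-KV_+K}+\norm{V_--KV_-K}\le\norm{V_+}+\norm{V_-}$, the last inequality being the semidefinite estimate of~\cite{Seel19}. Only your aside that this estimate ``never uses the sign'' of the perturbation term is off --- the nonnegativity of $V_\pm$ is exactly what improves $\norm{V_\pm-KV_\pm K}\le 2\norm{V_\pm}$ to $\norm{V_\pm-KV_\pm K}\le\norm{V_\pm}$ --- but since $V_\pm\ge 0$ by construction, this does not affect the validity of your argument.
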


\begin{proof}
  Recall from the proof of~\cite[Theorem~1]{Seel14} that
  \begin{equation*}
    \norm{\sin2\Theta}
    \le
    \frac{\pi}{2} \frac{\norm{V-KVK}}{d}
    ,
  \end{equation*}
  where $K = Q - Q^\perp$ is self-adjoint and unitary. Also recall from~\cite[Remark~2.5]{Seel14} that $\pi/2$ in this estimate
  can be replaced by $1$ if $\conv(\sigma) \cap \Sigma = \emptyset$ or $\sigma \cap \conv(\Sigma) = \emptyset$. It only remains
  to show that $\norm{V - KVK} \le \norm{V_+} + \norm{V_-}$.

  As in the proof of~\cite[Proposition~2.4]{Seel19}, the nonnegative operators $V_\pm$ satisfy
  $\norm{V_\pm - KV_\pm K} \le \norm{V_\pm}$. Hence, using $V = V_+ - V_-$, we have
  \begin{equation*}
    \norm{V - KVK}
    \le
    \norm{V_+ - KV_+K} + \norm{V_- - KV_-K}
    \le
    \norm{V_+} + \norm{V_-}
    ,
  \end{equation*}
  which completes the proof.
\end{proof}%

\begin{remark}\label{rem:sin2theta}
  With the~\emph{maximal angle} $\theta := \arcsin(\norm{ \EE_A(\sigma) - Q })$ between the subspaces $\Ran\EE_A(\sigma)$ and
  $\Ran Q$ and the inequality $\sin2\theta \le \norm{\sin2\Theta}$, we obtain from~\eqref{eq:sin2Theta} the~\emph{$\sin2\theta$
  estimate}
  \begin{equation}\label{eq:sin2theta}
    \sin2\theta
    \le
    \frac{\pi}{2} \frac{\norm{V_+} + \norm{V_-}}{d}
    ,
  \end{equation}
  where $\pi / 2$ can be replaced by $1$ if the additional separation condition mentioned in Proposition~\ref{prop:sin2Theta} is
  satisfied; cf.~\cite{Seel14,Seel19}. This estimate can also be derived by adapting the corresponding alternative reasoning
  from~\cite[Proposition~3.3]{Seel14} and~\cite[Proposition~A.1]{Seel19} (cf.~also~\cite[Corollary~4.3]{AM13}):
  
  Estimate~\eqref{eq:sin2theta} is clear if $\theta = \pi /2$, and for $\theta < \pi / 2$ we recall from the proof
  of~\cite[Proposition~3.3]{Seel14} that
  \begin{equation}\label{eq:sin2thetaproof}
    \sin2\theta
    \le
    \pi \frac{\norm{\EE_A(\Sigma) U^* V U \EE_A(\sigma)}}{d}
  \end{equation}
  with a certain unitary operator $U$ satisfying $U^*QU = \EE_A(\sigma)$. Also recall (see, e.g.,~\cite[Remark~3.2]{Seel14}) that
  the constant $\pi$ here can be replaced by $2$ if $\conv(\sigma) \cap \Sigma = \emptyset$ or
  $\sigma \cap \conv(\Sigma) = \emptyset$. Since  $U^* V_\pm U$ are nonnegative, by~\cite[Lemma~A.2]{Seel19} we have
  $2\norm{\EE_A(\Sigma) U^* V_\pm U \EE_A(\sigma)} \le \norm{U^* V_\pm U} = \norm{V_\pm}$. Thus,
  \begin{align*}
    2\norm{\EE_A(\Sigma) U^* V U \EE_A(\sigma)}
    &\le
    2\norm{\EE_A(\Sigma) U^* V_+ U \EE_A(\sigma)} + 2\norm{\EE_A(\Sigma) U^* V_- U \EE_A(\sigma)}\\
    &\le
    \norm{V_+} + \norm{V_-}
    ,
  \end{align*}
  which together with~\eqref{eq:sin2thetaproof} proves~\eqref{eq:sin2theta}.
\end{remark}

\subsection{Proof of the main results}\label{subsec:mainproofs}

\begin{proof}[Proof of Theorem~\ref{thm:favGeom}]
  Following the proof of~\cite[Theorem~1.1]{Seel19}, we see that estimate~\eqref{eq:maxAnglefavGeom} is a direct consequence of
  Proposition~\ref{prop:sin2Theta} with $Q = \EE_{A+V}(w)$ and Lemma~\ref{lem:projCont}.

  It remains to show the sharpness of estimate~\eqref{eq:maxAnglefavGeom}. This can be seen from the following example of
  $2 \times 2$ matrices (cf.~\cite[Remark~2.9]{Seel14} and the proof of~\cite[Theorem~1.1]{Seel19}): for arbitrary
  $0 \le v_\pm < 1$ with $v := v_+ + v_- < 1$ consider
  \begin{equation*}
    A
    :=
    \begin{pmatrix}
      \frac{1}{2} & 0\\
      0 & -\frac{1}{2}
    \end{pmatrix}
    \quad\text{ and }\
    V
    :=
    \begin{pmatrix}
      \frac{v_+ - v_- - v^2}{2} & \frac{v\sqrt{1-v^2}}{2}\\[0.1cm]
      \frac{v\sqrt{1-v^2}}{2} & \frac{v^2 + v_+ - v_-}{2}
    \end{pmatrix}
  \end{equation*}
  with $\sigma := \{ 1/2 \}$, $\Sigma := \{ -1/2 \}$, and $d := \dist(\sigma,\Sigma) = 1$.

  It is easy to verify that $\spec(V) = \{ -v_- , v_+ \}$ and that the spectrum of $A+V$ is given by
  $\spec(A+V) = \{ (v_+-v_- \pm \sqrt{1-v^2}) / 2 \}$. Denote
  $\omega := \{ (v_+-v_- + \sqrt{1-v^2}) / 2 \} \subset [1/2 - v_- , 1/2 + v_+]$ and $\theta := \arcsin(v) / 2$. Using the
  identities
  \begin{equation*}
    \frac{1-\sqrt{1-v^2}}{v}
    =
    \tan\theta
    =
    \frac{v}{1+\sqrt{1-v^2}}
    ,
  \end{equation*}
  it is then straightforward to show that
  \begin{equation*}
    (A+V) \begin{pmatrix} \cos\theta\\ \sin\theta \end{pmatrix}
    =
    \frac{v_+-v_- + \sqrt{1-v^2}}{2} \begin{pmatrix} \cos\theta\\ \sin\theta \end{pmatrix}
  \end{equation*}
  and, therefore,
  \begin{equation*}
    \arcsin(\norm{\EE_A(\sigma) - \EE_{A+V}(\omega)})
    =
    \theta
    =
    \frac{1}{2} \arcsin\Bigl( \frac{v_+ + v_-}{d} \Bigr)
    .
  \end{equation*}
  Thus, estimate~\eqref{eq:maxAnglefavGeom} is sharp, which completes the proof.
\end{proof}%

Just as Theorem~\ref{thm:favGeom}, we also obtain from Proposition~\ref{prop:sin2Theta} and Lemma~\ref{lem:projCont} the follow
result, which applies in the generic situation where no additional spectral separation condition other than~\eqref{eq:specSep} is
assumed. It unifies~\cite[Corollary~2]{Seel14} and~\cite[Corollary~2.5]{Seel19} and plays a crucial role in the proof of
Theorem~\ref{thm:gen}, see below.

\begin{corollary}\label{cor:sin2Thetagen}
  In the situation of Theorem~\ref{thm:gen} one has
  \begin{equation*}
    \arcsin(\norm{\EE_A(\sigma) - \EE_{A+V}(\omega)})
    \le
    \frac{1}{2} \arcsin\Bigl( \frac{\pi}{2} \frac{\norm{V_+}+\norm{V_-}}{d} \Bigr)
    \le
    \frac{\pi}{4}
  \end{equation*}
  whenever $\norm{V_+} + \norm{V_-} \le 2d / \pi$.
\end{corollary}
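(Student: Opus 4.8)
The plan is to argue exactly as in the proof of Theorem~\ref{thm:favGeom}, the only change being that here one invokes the constant $\pi/2$ from Proposition~\ref{prop:sin2Theta} rather than the constant $1$ available under a favourable geometry. So I would combine the $\sin2\Theta$ estimate of Proposition~\ref{prop:sin2Theta} with the norm-continuity of the spectral projections supplied by Lemma~\ref{lem:projCont}, interpolating along the perturbation $tV$, $t\in[0,1]$, so that the relevant operator angle stays on the principal branch of $\arcsin$.

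Concretely, I would set $M := \frac{\pi}{2}\,\frac{\norm{V_+}+\norm{V_-}}{d}$, so that $M\le 1$ by the hypothesis $\norm{V_+}+\norm{V_-}\le 2d/\pi$, and put $\theta_t := \arcsin\bigl(\norm{\EE_A(\sigma)-\EE_{A+tV}(\omega_t)}\bigr)$ for $t\in[0,1]$, with $\omega_t$ as in~\eqref{eq:defomegat}; note $\omega_0=\sigma$, hence $\theta_0=0$, and $\omega_1=\omega$. Since $(tV)_\pm=tV_\pm$ and $\EE_{A+tV}(\omega_t)$ projects onto a reducing subspace for $A+tV$, applying Proposition~\ref{prop:sin2Theta} to the perturbation $tV$, together with $\sin2\theta_t\le\norm{\sin2\Theta_t}$ (cf.~Remark~\ref{rem:sin2theta}), gives
\[
  \sin2\theta_t \le \frac{\pi}{2}\,\frac{t\norm{V_+}+t\norm{V_-}}{d} = tM .
\]
The hypothesis of Lemma~\ref{lem:projCont} holds here because $M\le1$ forces $\norm{V_+}+\norm{V_-}\le 2d/\pi<d$, so $t\mapsto\EE_{A+tV}(\omega_t)$ is norm-continuous and hence $t\mapsto 2\theta_t$ is continuous on $[0,1]$ with $2\theta_0=0$. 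For $t\in[0,1)$ one has $tM<1$, so $2\theta_t\neq\pi/2$; by continuity $2\theta_t$ therefore remains in $[0,\pi/2)$ throughout $[0,1)$, whence $2\theta_t\le\arcsin(tM)$ there. Letting $t\to1^-$ and using the norm-continuity of Lemma~\ref{lem:projCont} once more (so that $\theta_1=\lim_{t\to1}\theta_t$) yields $2\theta_1\le\arcsin(M)$, which is the first asserted inequality; the second then follows from $M\le1$ and the monotonicity of $\arcsin$.

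The only step needing genuine care — the very same one met in the proof of Theorem~\ref{thm:favGeom} — is the inversion of the $\sin2\theta$ bound: a priori $\sin2\theta_t\le tM$ only places $2\theta_t$ in $[0,\arcsin(tM)]\cup[\pi-\arcsin(tM),\pi]$, and it is precisely the homotopy in $t$, driven by the norm-continuity of Lemma~\ref{lem:projCont} and the vanishing of $\theta_0$, that excludes the upper interval. Everything else is routine.
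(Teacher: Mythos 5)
Your argument is correct and follows the same route the paper intends: the corollary is obtained by combining Proposition~\ref{prop:sin2Theta} (with the generic constant $\pi/2$) and the norm-continuity of $t\mapsto\EE_{A+tV}(\omega_t)$ from Lemma~\ref{lem:projCont}, exactly as in the proof of Theorem~\ref{thm:favGeom}. The intermediate-value argument you spell out to keep $2\theta_t$ on the principal branch of $\arcsin$ is precisely the step the paper delegates to the cited references, and you have reproduced it correctly.
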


\begin{proof}[Proof of Theorem~\ref{thm:gen}]
  Let $0 = t_0 \le \dots \le t_n = 1$, $n \in \NN$, be a finite partition of the interval $[0,1]$, and set
  \begin{equation*}
    \lambda_j
    :=
    \frac{(t_{j+1}-t_j)(\norm{V_+} + \norm{V_-})}{d - t_j(\norm{V_+} + \norm{V_-})}
    <
    1
    ,\quad
    j = 0, \dots, n-1
    .
  \end{equation*}
  Considering $A+t_{j+1}V = (A+t_jV) + (t_{j+1}-t_j)V$, we obtain with Corollary~\ref{cor:sin2Thetagen} as
  in~\cite{Seel16,Seel18,Seel19} (cf.~also~\cite{AM13}) that
  \begin{equation}\label{eq:boundOpt}
    \arcsin(\norm{\EE_A(\sigma) - \EE_{A+V}(\omega)})
    \le
    \frac{1}{2} \sum_{j=0}^{n-1} \arcsin\Bigl( \frac{\pi\lambda_j}{2} \Bigr)
    \ \text{ whenever }\,
    \lambda_j
    \le
    \frac{2}{\pi}
    .
  \end{equation}
  Moreover, with $1-t_{j+1}( \norm{V_+} + \norm{V_-} ) / d = (1 - \lambda_j) (1 - t_j ( \norm{V_+} + \norm{V_-} ) / d )$ for
  $j = 0, \dots, n-1$, $t_0 = 0$, and $t_n = 1$, we have
  \begin{equation}\label{eq:parameters}
    1 - \frac{\norm{V_+} + \norm{V_-} }{d}
    =
    \prod_{j=0}^{n-1} (1-\lambda_j)
    .
  \end{equation}
  Recalling from the proof of~\cite[Theorem~1.2]{Seel19} that the function $N$ satisfies
  \begin{equation*}
    N\Bigl( \frac{x}{2} \Bigr)
    =
    \inf\biggl\{
    \frac{1}{2} \sum_{j=0}^{n-1} \arcsin\Bigl( \frac{\pi\lambda_j}{2} \Bigr) \colon n \in \NN,\
    0 \le \lambda_j \le \frac{2}{\pi},\ \prod_{j=0}^{n-1} (1-\lambda_j) = 1 - x
    \biggr\}
    ,
  \end{equation*}
  for $0 \le x \le 2c_\text{crit}$, the claim now follows from~\eqref{eq:boundOpt} and~\eqref{eq:parameters}.
\end{proof}%

\begin{remark}
  Considering partitions of the interval $[0 , 1]$ with arbitrarily small mesh size, we obtain from~\eqref{eq:boundOpt}
  analogously to~\cite[Remark~2.2]{Seel18},~\cite[Remark~2.1]{Seel16} and~\cite[Remark~2.6]{Seel19} that
  \begin{align*}
    \arcsin(\norm{\EE_A(\sigma) - \EE_{A+V}(\omega)})
    &\le
    \frac{\pi}{4} \int_0^1 \frac{\norm{V_+}+\norm{V_-}}{d - t\norm{V_+} - t\norm{V_-}} \,\dd t\\
    &=
    \frac{\pi}{4} \log \frac{d}{d - \norm{V_+} - \norm{V_-}}
    ,
  \end{align*}
  and the latter is strictly less than $\pi / 2$ whenever
  \begin{equation*}
   \frac{\norm{V_+} + \norm{V_-}}{d}
   \le
   2 \frac{\sinh(1)}{\exp(1)}
   <
   2 c_\text{crit}.
  \end{equation*}
\end{remark}


\end{document}